\documentclass[12pt]{amsart} 
\usepackage{amssymb}
\usepackage{graphicx}

\theoremstyle{plain}
\newtheorem{theorem}{Theorem}
\newtheorem{lemma}[theorem]{Lemma}
\newtheorem{pro}[theorem]{Proposition}
\newtheorem{cor}[theorem]{Corollary}

\theoremstyle{definition}

\newtheorem{remark}[theorem]{Remark}

\begin{document}
\title{On sofic approximations of
 Property (T) groups}

\author{G\'abor Kun}

\email{kungabor@renyi.hu}

\thanks{This research was supported by the Marie Curie IIF Fellowship Grant 627476, by the ERC Consolidator Research Grant No. 648017, by the National Research, Development and Innovation Office (NKFIH) Grant ERC HU$_{15}$ 118286 and by the MTA Momentum Random Spectra research grant.}

\maketitle

\begin{abstract}
We prove Bowen's conjecture that every sofic approximation of a countable group with Kazhdan Property (T) is essentially a vertex-disjoint union of expander graphs. 
We characterize graph sequences that are essentially a vertex-disjoint union of expander graphs in terms of the Markov operator.
\end{abstract}

\section{Introduction}

A sequence of finite graphs is locally convergent if for every $r$ the isomorphism class of a rooted $r$-ball centered at a vertex chosen uniformly at random converges in distribution.
A group is called \textbf{sofic} if any of its labeled Cayley graphs admits a sofic approximation, this is, a local approximation by finite labeled graphs. Sofic groups were introduced by Gromov \cite{Gro}, see also Weiss \cite{We}. Many classical conjectures not known in general hold for the class of sofic groups: Gottschalk's conjecture (Gromov \cite{Gro}), Kaplansky's direct finiteness conjecture (Elek, Szab\'o \cite{ESdf}) and Connes' embedding conjecture (Elek, Szab\'o \cite{EShyp}). For more on sofic groups see Pestov \cite{P}, Capraro and Lupini \cite{CL}. It is a major open problem if every group is sofic, though it is widely believed that non-sofic groups exist. The main result of this paper, Theorem~\ref{Bowen} and its proof has already turned out a useful tool to construct inapproximable group actions: The author and Thom have used this to prove that a quite wide class of group actions has no local-global approximation (defined by Hatami, Lov\'asz and Szegedy \cite{HLSz}) by finite graphs. Our main result is the following theorem conjectured by Bowen \cite{B}.

\begin{theorem}\label{Bowen}
Let $\Gamma$ be a countably infinite Property (T) group and $\{ G_n\}_{n=1}^{\infty}$ a sofic approximation of $\Gamma$. 
Then there exists a $\gamma>0$ and a sequence of finite $d$-regular graphs $\{ G'_n \}_{n=1}^{\infty}$ such that

\begin{enumerate}

\item
{$V(G_n) = V(G'_n)$}

\item
{$\lim_{n \rightarrow \infty} \frac{|E(G_n) \Delta E(G'_n)|}{|V(G_n)|}=0$}

\item
{There exists $\gamma>0$ that for every $n$ the graph $G'_n$ is a vertex-disjoint union of $d$-regular graphs with Cheeger constant at least $\gamma$.}

\end{enumerate}

\end{theorem}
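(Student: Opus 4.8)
The plan is to reduce part (3) to a spectral-gap property of the limiting Markov operator, and then to extract that gap directly from Kazhdan's property (T). Since a countably infinite group with property (T) is finitely generated, fix a finite symmetric generating set $S$ with $|S|=d$, so that the Cayley graph is $d$-regular. Because $\{G_n\}$ converges locally to this Cayley graph, for every fixed $r$ all but a $o(1)$-fraction of vertices have their $r$-ball isomorphic to the corresponding Cayley ball; transporting the canonical labeling of the Cayley ball through such an isomorphism equips the incident edges with generator labels, and for overlapping good balls these labels agree. This yields, for each $s\in S$, a partial bijection $\pi_s^{(n)}$ of $V(G_n)$. Extending arbitrarily on the $o(|V_n|)$ exceptional vertices and deleting or adding $o(|V_n|)$ edges, I may assume $G'_n$ is $d$-regular with $\pi_s^{(n)}$ a genuine permutation and $M_n=\frac1d\sum_{s\in S}\pi_s^{(n)}$ its normalized adjacency (Markov) operator on $\ell^2(V(G'_n))$. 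This already secures (1) and (2); the content is (3).

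First I would pass to a limit. Taking an ultraproduct of the spaces $(V(G'_n),\text{uniform})$ along a nonprincipal ultrafilter produces a probability space $(X,\mu)$ on which the $\pi_s^{(n)}$ converge to measure-preserving transformations $\pi_s$. The decisive use of convergence to a \emph{Cayley} graph, rather than to an arbitrary graph, is that every relation $w=e$ of $\Gamma$ holds at all but $o(|V_n|)$ vertices, since a vertex with a good $r$-ball satisfies every relator of length at most $r$; hence in the limit $\pi_w=\mathrm{id}$ almost everywhere for each relator, so $s\mapsto\pi_s$ generates an honest measure-preserving action of $\Gamma$ on $(X,\mu)$. Its Koopman averaging operator $M=\frac1d\sum_{s\in S}\pi_s$ on $L^2(X,\mu)$ is precisely the limit of the $M_n$.

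Next I would invoke property (T) in its spectral-gap reformulation: there is a constant $c=c(\Gamma,S)>0$ such that, for the Koopman representation of \emph{any} measure-preserving action of $\Gamma$, the averaging operator restricted to the orthogonal complement of the invariant vectors has norm at most $1-c$. Applied to $(X,\mu)$, this says the spectrum of $M$ avoids the interval $(1-c,1)$, with $\{1\}=\ker(M-I)$ the space of invariant (asymptotically component-constant) functions. Since the empirical spectral measures of $M_n$ converge weakly to the $\tau$-spectral measure of $M$ — the moments $\frac1{|V_n|}\mathrm{tr}(M_n^{\,k})$ count closed $k$-walks and are local quantities, hence converge to $\tau(M^k)$ — this gap forces $M_n$ to carry only $o(|V_n|)$ eigenvalues inside any fixed compact subinterval $[1-c+\delta,\,1-\delta]$ of the gap. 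Thus $1$ is an isolated point of the limiting spectrum: essentially the only almost-invariant vectors are the genuinely invariant ones.

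Finally I would feed this into the Markov-operator characterization of essentially-disjoint-unions-of-expanders announced in the abstract: a bounded-degree sequence becomes, after modifying $o(|V_n|)$ edges, a vertex-disjoint union of $\gamma$-expanders exactly when its Markov operators exhibit such a spectral gap just below $1$. This produces the uniform $\gamma$ and the graphs $G'_n$, finishing (3). I expect the main obstacle to lie precisely in this conversion — turning the soft analytic statement ``only $o(|V_n|)$ eigenvalues survive in $(1-c,1)$'' into the combinatorial one ``$o(|V_n|)$ edges may be deleted so that every remaining component genuinely $\gamma$-expands.'' The difficulty is that an eigenvalue $\lambda$ in the gap yields, through a sweep/Cheeger argument, only a cut of small \emph{conductance} $\lesssim\sqrt{1-\lambda}$, whose absolute edge count may be of order $|V_n|$; simultaneously bounding the total number of edges removed across all $o(|V_n|)$ offending eigenvectors is the crux, and is where the quantitative Markov-operator characterization does its real work.
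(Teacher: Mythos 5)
Your skeleton---produce a measure-preserving $\Gamma$-action on a limit space, extract a spectral gap from Property (T), then feed this into the Markov-operator characterization (Theorem~\ref{graphequiv})---is the same as the paper's, but the two steps you treat as routine are exactly the two places where the paper has to do real work, and as written both steps have genuine gaps.

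First, the labeling step. You claim that transporting ``the canonical labeling'' of a Cayley ball through an isomorphism labels the edges of $G_n$, and that ``for overlapping good balls these labels agree.'' Neither claim is justified: the rooted ball of a Cayley graph has in general nontrivial automorphisms, so the isomorphism from a good ball of $G_n$ to the Cayley ball is not unique and induces no canonical labels, and nothing forces independently chosen isomorphisms at nearby vertices to cohere. Producing genuine generator permutations $\pi_s^{(n)}$ (even up to $o(|V(G_n)|)$ errors) from \emph{unlabeled} local convergence amounts to upgrading an unlabeled approximation to a sofic-type labeled one, which is not known to be possible and is precisely what Bowen's conjecture must not assume. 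The paper names this obstruction explicitly (``we do not have a natural $\Gamma$-action on $X$'') and circumvents it without ever labeling the finite graphs: it takes the weak limit graphing $X$, forms the product $Z=X\times Y$ with the space $Y$ of all consistent labelings of the rooted Cayley graph, and obtains the measure-preserving $\Gamma$-action on $Z$---in effect integrating over all labelings rather than selecting one.

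Second, the final conversion is not merely ``the crux left to the characterization''; the implication you rely on is false. Hypothesis (1) of Theorem~\ref{graphequiv} is not an eigenvalue count but the inequality $\| M^2f-Mf\| \leq (1-\varepsilon)\|Mf-f\|+\delta$ for \emph{every} $f\colon V(G_n)\to[0;1]$, and this does not follow from ``only $o(|V(G_n)|)$ eigenvalues lie in the bad region.'' Example: let $G_n$ be a vertex-disjoint union of expanders whose spectra lie in $\{1\}\cup[-(1-c),1-c]$, together with one bipartite expander component $C$ on half the vertices. Then exactly one eigenvalue (namely $-1$) lies outside $\{1\}\cup[-(1-c),1-c]$, so every reasonable count of bad eigenvalues is $o(|V(G_n)|)$; yet for $f=\chi_A$ with $A$ one side of the bipartition of $C$ one gets $Mf=\chi_B$ and $M^2f=\chi_A$, hence $\|M^2f-Mf\|=\|Mf-f\|=1/\sqrt{2}$, violating the inequality for every $\varepsilon>0$ once $\delta<\varepsilon/\sqrt{2}$. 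A single delocalized eigenvector in the bad region is visible to $[0;1]$-valued test functions, so spectral-measure convergence alone can never verify hypothesis (1). (The same example shows that spectrum near $-1$, not only just below $1$, is an obstruction; your ``norm at most $1-c$ on the orthocomplement of the invariants'' reformulation of (T) is itself false for a bipartite Cayley graph with your fixed symmetric generating set, and needs the generating set adjusted, e.g.\ made lazy, as is implicit in the paper's definition.) The paper does not count eigenvalues at all: it proves hypothesis (1) directly by contradiction, taking a weak limit of the pairs $(G_n,f_n)$ where the $f_n$ are putative violating $[0;1]$-valued functions, lifting the limit function $f$ on $X$ to $f^*$ on $Z$, and applying Lemma~\ref{Hilbert} to $f^*$; this is exactly where the $[0;1]$-boundedness and the $\delta$-slack are used, and it is the step your proposal leaves open.
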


\begin{remark}
Theorem~\ref{Bowen} gives an ergodic decomposition theorem for certain non-separable probability measure spaces on the ultraproducts of these graphs with an invariant group action as pointed out by Bowen \cite{B}.
\end{remark}

%Our theorem reveals a highly unexpected \textbf{graph theoretical phenomenon}: The local statistics of a sparse graph may (robustly) witness expansion %properties. So far in every known case when the local statistics determines the global behavior of a sparse graph the graph is hyperfinite: For example, %every minor-closed class is known to be testable (Benjamini, Schramm, Shapira \cite{BSS}), and Schramm proved that every sequence of graphs that locally %converges to a Cayley graph of an amenable group is hyperfinite \cite{S}. Theorem~\ref{Bowen}, our main result is a natural counterpart of Schramm's %theorem.

%Our theorem is the first one for sparse graphs that implies quasirandom global structure under local conditions. The notion of quasirandomness is at the %heart of Szemer\'edi's regularity lemma and the limit theory of dense graphs: These rely on the fact that quasirandomness of dense graphs can be implied %by local conditions. See the book of Lov\'asz on graph limits for the details \cite{L}.

We will give a more detailed description of graph sequences that are essentially disjoint union of expander graphs. Even though this seems to be a complete, ultimate description of a graph sequence a refinement of this picture turns out very useful, especially in case of sofic approximations of Kazhdan groups. We can give a characterization in terms of the Markov operator. $M$ denotes the Markov operator, and $\|*\|$ denotes the $L_2$ norm and $\| * \|_{\infty}$ the $L_{\infty}$ norm with respect to the counting measure on finite sets of vertices in a finite graph. Theorem~\ref{graphequiv} is the main tool in the proof of Theorem~\ref{Bowen}.

\begin{theorem}\label{graphequiv}
Let $\{ G_n \}_{n=1}^{\infty}$ be a sequence of $d$-regular graphs. Then the followings are equivalent:

\begin{enumerate}

\item
{There exists an $\varepsilon>0$ such that for every $\alpha>0$ for all, but finitely many $n$ and for every function
$f: V(G_n) \rightarrow \mathbb{R}$ the inequality $\| M_n^4f - M_n^2f \| \leq (1-\varepsilon) \| M_n^2f-f \| + \alpha|V(G_n)|^{\frac{1}{2}} \|f \|_{\infty}$ holds.}

\item
{There exists a $C>0$ such that for every $\alpha>0$ for all, but finitely many $n$ there exists a  set $B_n \subseteq V(G_n), |B_n| < \alpha |V(G_n)|$ such that
for every $T \subseteq V(G_n) \setminus B_n$ if $|\partial T| < C|T|$ then there exists a set $U$ such that $|U \triangle T| < |T|/2$ and $|\partial U| < \alpha |U|.$ 
}

\item
{There exists a $\gamma>0$ such that for every $\alpha>0$ for all, but finitely many $n$ there exists a partition $V(G_n)=\cup_{i=1}^{a_n} P^i_n$
such that $|P^0_n|<\alpha|V(G_n)|, \sum_{i=1}^{a_n} |\partial P^i_n| < \alpha |V(G_n)|$, and for every $1 \leq i \leq a_n$ and
$S \subseteq P^i_n$, where $|S| \leq |P^i_n|/2$ the inequality $|\partial S| \geq \gamma |S|$ holds.
}

\item
{There exists a $\gamma>0$ and a sequence of $d$-regular graphs $\{ \underline{G}_n \}_{n=1}^{\infty}$ such that $V(G_n) = V( \underline{G}_n)$,
$\lim_{n \rightarrow \infty} \frac{|E(G_n) \triangle E(\underline{G}_n)|}{|V(G_n)|}=0$ and every $\underline{G}_n$ is a vertex-disjoint union of graphs with Cheeger constant at least $\gamma$.}

\end{enumerate}

\end{theorem}

\begin{remark}
We know much more in the case when the sequence is the sofic approximation of a Kazhdan group: $\varepsilon$ can be chosen to be the Kazhdan constant in $(1)$. 
The exceptional small set $B_n$ in $(2)$ and $P_n^0$ in $(3)$ can be contained by the set of vertices which give the error of the sofic approximation, see Proposition~\ref{soficrounding}. 
\end{remark}

We can give a similar description of graph sequences that are essentially the disjoint union of expander graphs far from bipartite graphs, this is, with no large negative eigenvalues. 

\begin{cor}%\footnote{An earlier version of this paper misused the definition of $\gamma$-expanders claiming that $(1)$ is equivalent to $(4)$ of Theorem~\ref{graphequiv}, which is obviously false.}
Let $\{ G_n \}_{n=1}^{\infty}$ be a sequence of $d$-regular graphs. Then the followings are equivalent:

\begin{enumerate}

\item
{There exists an $\varepsilon>0$ such that for every $\alpha>0$ for all, but finitely many $n$ and for every function
$f: V(G_n) \rightarrow \mathbb{R}$ the inequality $\| M^2f-Mf \| \leq (1-\varepsilon) \| Mf-f \| + \alpha|V(G_n)|^{\frac{1}{2}} \|f \|_{\infty}$ holds.}

\item
{There exists a $\lambda>0$ and a sequence of $d$-regular graphs $\{ G'_n \}_{n=1}^{\infty}$ such that $V(G_n) = V(G'_n)$,
$\lim_{n \rightarrow \infty} \frac{|E(G_n) \triangle E(G'_n)|}{|V(G_n)|}=0$ and every $G'_n$ is a vertex-disjoint union of $d$-regular graphs such that all but one eigenvalue of 
every such graph is in the interval $(-\lambda, \lambda).$}

\end{enumerate}

\end{cor}

\begin{remark}
If for $\varepsilon>0$ a finite regular graph $G$ satisfies for every function $f: V(G) \rightarrow \mathbb{R}$ the inequality 
$\| M^2f-Mf \| \leq (1-\varepsilon) \| Mf-f \|$ then $G$  is a vertex-disjoint union of 
$d$-regular graphs such that all but one eigenvalue of every such graph is in the interval $(\varepsilon-d, d-\varepsilon).$
\end{remark}

We give further applications in the last section: We show that a sequence of $2$-dimensional complexes that locally converges to the Cayley complex of a finitely presented 
Property (T) group is not $1$-hyperfinite: This gives an alternative to the construction of Freedman and Hastings \cite{FH}. We answer a question of L. M. Lov\'asz on Lipschitz
embeddings of graphs into large girth graphs and give a new proof of Theorem 9.1. of Mendel and Naor \cite{MN} on coarse embeddings into large girth graphs.\footnote{The theorems of the last section have recently been generalized by the author and Gaboriau to every non-treeable group.}

\section{Definitions}

We deal with sequences $\{G_n\}_{n=1}^{\infty}$ of finite, undirected, $d$-regular graphs.
Finite graphs are equipped with the counting measure, where the measure of every vertex is $1$.
Let $\| * \|, \|*\|_{\infty}$ and $\| * \|_1$ denote the $L_2, L_{\infty}$ and $L_1$ norms, respectively.
$M_G$ will denote the Markov operator of the graph $G$. We will usually drop the subscript.
%The eigenvalues of the Markov operator of an undirected graph (reversible Markov chain) are real. The first eigenvalue is $1$.
Given a subset $S \subseteq V(G)$ define its edge boundary as $\partial(S)=E(S,V(G) \setminus S)$.
The Cheeger constant of a finite, undirected graph $G$ is $h(G)=min_{S \subseteq V(G), |S| \leq |V(G)|/2} \frac{|\partial S|}{|S|}$.
We say that the sequence $\{G_n\}_{n=1}^{\infty}$ is {\bf essentially a disjoint union of expander graphs}
if there exists $\gamma>0$ such that every $G_n$ can be turned into a vertex-disjoint union of $d$-regular graphs with Cheeger constant at least $\gamma$ 
on the same set of vertices after removing and adding $o(|V(G_n)|)$ edges.

%Given an integer $r$ and a graph $G$ we will consider the following probability distribution on isomorphism classes of rooted graphs: Pick a vertex uniformly at random and consider the %isomorphism class of the rooted $r$-ball centered and rooted at $x$. We call thissequence of probability distributions the local statistics of $G$. A sequence of graphs is locally convergent if %this probability measure converges in distribution for every $r$.
%A graphing is a graph equipped with an extra measurable structure, a generalization of finite graphs (with the uniform measure).
%The vertex set of a graphing is a probability measure space, the edge set is measurable in the product measure, and the edge set
%of a graphing is a countable union of measure-preserving, partially defined mappings. The notion of local statistics and local
%convergence extends to graphings. Every sequence of bounded degree graphs admits a locally convergent subsequence.
%The most convenient limit objects are the weak limit and the ultraproduct. These are graphings. See Lov\'asz \cite{L} on limits of graphs.
%The edges of the graph may have an asymmetric labeling by the generating set of a (finitely generated) group. Every unlabeled graph
%will be undirected in this paper. The notion of local statistics and local convergence extends to labeled graphs and graphings.

We say that the finitely generated group $\Gamma$ has Kazhdan Property (T) if there is a finite set of generators
$S$ and an $\kappa>0$ such that for every Hilbert space $\mathcal{H}$ and $\pi: \Gamma \rightarrow U(\mathcal{H})$
unitary representation of $\Gamma$ either $\pi$ has a non-zero, invariant vector, or for $A = \sum_{s \in S} \pi(S)/|S|$
the inequality $\| Ah \| \leq (1-\kappa) \| h \|$ holds for every $h \in \mathcal{H} \setminus \{ \underline{0} \}$. See the
book of Bekka, de La Harpe and Valette on Property (T) \cite{BHV}.
We will use the following consequence of the Kazhdan Property.

\begin{lemma}\label{Hilbert}
Let $\Gamma$ be a finitely generated group with Kazhdan Property $(T)$ with respect to $\kappa>0$ and a finite set of
generators $S \subseteq \Gamma$, where $1 \in S$. Let $\mathcal{H}$ be a Hilbert space, $\pi: \Gamma \rightarrow U(\mathcal{H})$ a
unitary representation of $\Gamma$. Set $A = \sum_{s \in S} \pi(S)/|S|$. Then for every $h \in \mathcal{H}$ the inequality
$\| A^2h-Ah \| \leq (1-\kappa) \| Ah-h \|$ holds.
\end{lemma}
\begin{proof}
Consider the set of fixed points $\mathcal{F}= \{ h \in \mathcal{H}:
\pi(\gamma)h = h \text{ } \forall \gamma \in \Gamma\}$. $\mathcal{F}$ is a closed
subspace of $\mathcal{H}$. The orthogonal complement of $\mathcal{F}$,
$\mathcal{F}^{\perp}$ is a closed subspace invariant under $\pi(\gamma)$ for
every $\gamma$, since $\pi$ is a unitary representation. Hence
$\mathcal{F}^{\perp}$ is invariant under $A$. These yield that
$(Ah-h) \in \mathcal{F}^{\perp}$ for every $h \in \mathcal{H}$.

The restriction of the representation $\pi$ to $\mathcal{F}^{\perp}$ induces a
representation that does not have any fixed point but $0$, hence
$\| A g \| \leq (1-\kappa) \| g \|$ holds for every $g \in \mathcal{F}^{\perp}$.
We conclude that $\| A^2h - Ah\| =\| A (Ah-h) \| \leq (1-\kappa)\| Ah - h\|$.
\end{proof}

\section{Proofs}

\begin{lemma}\label{rounding}
Let $G$ be a finite $d$-regular graph, $M$ denote the Markov operator, $f:V(G) \rightarrow \mathbb{R},$ and $0<a<b<1$.
Then there exists a $t \in (a;b)$ such that the set $U_t=\{x: f(x)>t \}$ satisfies
$|\partial U_t|^2 \leq \frac{8d^2}{a(b-a)^2}  \|f\|_1 \|f\| \|Mf-f\|.$
\end{lemma}

\begin{proof}
Note that $2\|f\| \|Mf-f\| \geq (\|f\|+\|Mf\|)(\|f\|-\|Mf\|) = \|f\|^2 - \|Mf\|^2 = 
\sum_{x \in V(G)} f(x)^2 - \sum_{x \in V(G)} Mf(x)^2 = \\
1/d \sum_{x \in V(G)} \sum_{y: (x,y) \in E(G)} f(y)^2-Mf(x)^2 = \\
1/d \sum_{x \in V(G)} \sum_{y: (x,y) \in E(G)} (f(y)-Mf(x))^2.$

We use again the triangle inequality: \\
$\big( \sum_{x \in V(G)} \sum_{y: (x,y) \in E(G)} (f(x)-f(y))^2 \big)^{\frac{1}{2}} \leq  \\
\big( \sum_{x \in V(G)} \sum_{y: (x,y) \in E(G)} (f(y)-Mf(x))^2 \big)^{\frac{1}{2}} + \\ 
\big( \sum_{x \in V(G)} \sum_{y: (x,y) \in E(G)} (Mf(x)-f(x))^2 \big)^{\frac{1}{2}} \leq \\
(2d)^{\frac{1}{2}} \|f\|^{\frac{1}{2}} \|Mf-f\|^{\frac{1}{2}} + d^{\frac{1}{2}} \|Mf - f\|.$

Let us choose $t \in (a;b)$ uniformly at random. Applications of the triangle and Cauchy-Schwarz inequalities imply \\
$\mathbb{E}_t^2 |\partial U_t| \leq \big(\sum_{x \in V(G), f(x)>a} \sum_{y: (x,y) \in E(G)} \frac{|f(x)-f(y)|}{b-a} \big)^2 \leq \\
d|\{x: f(x)>a \}| \sum_{x \in V(G), f(x)>a} \sum_{y: (x,y) \in E(G)} \big( \frac{f(x)-f(y)}{b-a} \big)^2 < \\
\frac{d\|f\|_1}{a} \frac{d}{(b-a)^2} (2^{\frac{1}{2}}\|f\|^{\frac{1}{2}} \|Mf-f\|^{\frac{1}{2}} +  \|Mf - f\|)^2 \leq
\frac{d\|f\|_1}{a} \frac{d}{(b-a)^2} (4\|f\| \|Mf-f\|^2 +  2\|Mf-f\|^2) \leq \frac{8d^2}{a(b-a)^2}  \|f\|_1 \|f\| \|Mf-f\|.$ 
The lemma follows.
\end{proof}

\begin{cor}~\label{kov}
Let $G$ be a finite $d$-regular graph, $M$ denote the Markov operator and $T \subseteq V(G)$. Assume that there is an integer $K$ and 
$\kappa, \varepsilon>0$ such that $\|M^{k+1}\chi_T - M^k \chi_T \| \leq \big( (1-\kappa)^k + \varepsilon \big) \|M\chi_T-\chi_T\|$ holds for every $1 \leq k \leq K$.
Then there exists a set $U$ such that\\
 $|\partial U|^2 \leq 432  d^2 |T|^2 \big( (1-\kappa)^K + \varepsilon \big)$ and \\
$|U \triangle T| \leq 9\big( \frac{1}{\kappa}+K\varepsilon \big)^2 \|\chi_T - M\chi_T \|^2$.  
\end{cor}

\begin{proof} %szebben
We apply Lemma~\ref{rounding} to $f=M^K \chi_T$ and $(a;b)=(\frac{1}{3}; \frac{2}{3})$ in order to get a set $U=U_t$ such that 
$|\partial U|^2 \leq 216 d^2 |T|^{\frac{3}{2}} \|M^{K+1}\chi_T-M^K\chi_T\| \leq  216 d^2 |T|^{\frac{3}{2}} \big( (1-\kappa)^K + \varepsilon \big) \| M\chi_T - \chi_T \|
\leq 432  d^2 |T|^2 \big( (1-\kappa)^K + \varepsilon \big)$.

We bound $|U \triangle T|$ using the triangle inequality:
$\| M^K\chi_T - \chi_T \| \leq \sum_{k=0}^{K-1} \| M^{k+1}\chi_T - M^k \chi_T \| \leq  \sum_{k=0}^{K-1} \big( (1-\kappa)^k +\varepsilon \big) \|M\chi_T-\chi_T\| \leq (\frac{1}{\kappa} + K\varepsilon ) \| M\chi_T - \chi_T \|$. Note that $M^K\chi_T (x) < \frac{2}{3}$ for every $x \in T \setminus U$, while 
$M^K\chi_T(x) > \frac{1}{3}$ for every $x \in U \setminus T$. Hence  $|U \triangle T| \leq 9 \| M^K\chi_T - \chi_T \|^2$. The lemma follows.
\end{proof}

\begin{lemma}~\label{soficremoval}
Let $\Gamma$ be a countable Kazhdan Property (T) group with a fixed finite set of generators $S$, where $1 \in S$, and let $\kappa$ denote the Kazhdan constant.
For every $\varepsilon>0$ and positive integer $k$ there exists an integer $r$ such that for every finite, regular, edge labeled (with the elements of $S$) graph $G$, 
and subset $T \subseteq V(G)$ if for every $t \in T$ the ball $B(t,r)$ is isomorphic to the $r$-ball in the Cayley graph of $\Gamma$ then the inequality \\ 
$\|M^{k+1}\chi_T - M^k \chi_T \| \leq \big( (1-\kappa)^k + \varepsilon \big) \|M\chi_T-\chi_T\|$ holds. 
\end{lemma}

\begin{proof}
We prove by contradiction. Suppose that there exists an $\varepsilon>0$ such that for every $r$ 
there exists a finite graph $G_r$ and $T_r \subseteq V(G_r)$ such that 
the conditions of the lemma hold, but $\|M^{k+1}\chi_{T_r}-M^k\chi_{T_r}\| > \big((1-\kappa)^k + \varepsilon \big) \|M\chi_{T_r}-\chi_{T_r}\|$.
Consider the measure $\mu_r$ on $V(G_r)$ such that every vertex has measure $\frac{1}{|supp(T_r)|}$, and an ultraproduct $G$ of the graphs
on these measure spaces by a nonprincipial ultrafilter. Then $V(G)$ is also a measure space. Consider the subset $X \subseteq V(G)$ that is the connected component of 
the ultraproduct $T$ of the sets $\{T_r\}_{r=1}^{\infty}$. The labeling of the edges by the elements of $S$ induces a probability-measure preserving, 
free action of $\Gamma$  on the restriction of the measure space $V(G)$ to $X$. This extends to a unitary representation $\pi: \Gamma \rightarrow U(L_2(X))$.
Note that $\|M^{k+1}\chi_T-M^k\chi_T\| \geq \big( (1-\kappa)^k + \varepsilon \big)  \|M\chi_T-\chi_T\| >  (1-\kappa)^k \|M\chi_T-\chi_T\|$. This contradicts Lemma~\ref{Hilbert} and completes the proof of the lemma.
\end{proof}

Lemma~\ref{soficrounding} and Corollary~\ref{kov} imply the following Proposition.

\begin{pro}~\label{soficrounding}
%Let $\Gamma$ be a finitely generated Kazhdan group with a fixed finite and symmetric generating set $S$ and Kazhdan constant $\kappa$.
%For every $\varepsilon>0$ there exists an integer $r$ such that for any finite, edge labeled (by the elements of $S$) graph $G$ and $T \subseteq V(G)$ if the  ball $B(t,r)$ is isomorphic to %the $r$-ball in the Cayley graph of $\Gamma$ for every $t$ in $T$ then there exists a set $U$ such that $|\partial U| \leq \varepsilon |U|$ and 
%$|U \triangle T| \leq \frac{10}{\kappa^2} \|\chi_T - M\chi_T \|^2 \leq \frac{5}{d\kappa^2}|\partial U|$. 
Let $\Gamma$ be a finitely generated Kazhdan group with finite and symmetric generating set $S$, where $1 \in S$, and Kazhdan constant $\kappa$.
For every $\alpha>0$ there exists an integer $r$ such that for any finite, $S$-edge labeled graph $G$ and $T \subseteq V(G)$, where the  ball $B(t,r)$ is isomorphic
to the $r$-ball in the Cayley graph of $\Gamma$ for every $t$ in $T$ there exists a set $U$ such that $$|\partial U | \leq \alpha | \chi_U | \quad \mbox{and} \quad | U \triangle T | \leq \frac{10}{\kappa^2} \|\chi_T - M\chi_T \|^2 \leq \frac{5 |\partial T|}{d\kappa^2}.$$ 
\end{pro}

This shows that the sofic approximation of a Kazhdan group satisfies condition $(2)$ in Theorem~\ref{graphequiv}. Hence Theorem~\ref{graphequiv} implies Theorem~\ref{Bowen}.

\begin{pro}\label{removal}
Let $G$ be a finite $d$-regular graph, $k>0$ an integer, $0<c<c'<1$ and $\alpha,
\delta>0$. Assume that for every subset $S \subseteq V(G)$ the inequality
$\| M^{k+1}\chi_S - M^k\chi_S \| < c \|M \chi_S - \chi_S \| + \delta |V(G)|^{\frac{1}{2}}$ holds.
Then there exists a set of
vertices $B \subseteq V(G)$ of size at most
$\frac{\delta^2(d^{2k+2}+1)}{\alpha^2 (c'-c)^2} |V(G)|$
such that for every $S \subseteq V(G) \setminus B$ \\
either $\| M\chi_S-\chi_S \| < \alpha \| \chi_S \|$, \\
or $\|M^{k+1}\chi_S - M^k\chi_S \| < c' \| M \chi_S - \chi_S \|$ holds.
\end{pro}

\begin{proof}
Let $B'$ be a maximal subset of $V(G)$ under containment that satisfies the inequalities
$\|M^{k+1}\chi_{B'} - M^k\chi_{B'} \| \geq c' \|M \chi_{B'} - \chi_{B'}\|$ and
$\| M\chi_{B'} -\chi_{B'} \| \geq \alpha \| \chi_{B'} \|$. Set $B = N_{2k+2} (B')$ (the set of vertices at distance at most $2k+2$).

We prove the upper bound on the size of $B$. For any subset of vertices $S$ the inequality $|N_{2k+2}(S)| \leq (d^{2k+2}+1) |S|$ holds,
since $G$ is $d$-regular, in particular$|B| \leq (d^{2k+2}+1) |B'|$.
We know that \\
$$(1) \text{     }\| M^{k+1}\chi_{B'} - M^k\chi_{B'} \| < c \|M \chi_{B'} -\chi_{B'} \| + \delta  |V(G)|^{\frac{1}{2}},$$
since this holds for any subset. On the other hand, $B'$ satisfies 
$$(2) \text{     }\| M^{k+1}\chi_{B'} - M^k\chi_{B'} \| \geq c' \|M \chi_{B'} - \chi_{B'} \| \text{ and}$$ 
$$(3) \text{     }\| M \chi_{B'}-\chi_{B'} \| \geq \alpha \| \chi_{B'} \|.$$

\noindent
$(1)$ and $(2)$ imply
$$(c'-c) \|M \chi_{B'} -\chi_{B'} \| < \delta |V(G)|^{\frac{1}{2}}.$$

\noindent
Using $(3)$ we can conclude that 

$|B| \leq (d^{2k+2}+1)|B'|= (d^{2k+2}+1)\|\chi_{B'}\|^2 \leq (d^{2k+2}+1) \frac{\| M \chi_{B'}-\chi_{B'}\|^2}{\alpha^2} \leq \frac{\delta^2(d^{2k+2}+1)}{\alpha^2 (c'-c)^2} |V(G)|$.

Let $S \subseteq V(G) \setminus B$ and consider the set $S \cup B'$.
Note that \\ $\|M^{k+1}\chi_{S \cup B'} - M^k\chi_{S \cup B'} \|^2 = \|M^{k+1}\chi_S -M^k \chi_S \|^2 +
\|M^{k+1}\chi_{B'} - M^k\chi_{B'} \|^2$, since the supports of $M^{k+1}\chi_S - M^k\chi_S$ and $M^{k+1}\chi_{B'} - M^k\chi_{B'}$ are disjoint. Similarly,
$\| M \chi_{S \cup B'} - \chi_{S \cup B'} \|^2 = \| M \chi_S  - \chi_S \|^2 +\| M \chi_{B'} - \chi_{B'} \|^2$.
The maximality of $B'$ implies that either $\| M\chi_S-\chi_S \| < \alpha \| S \|,$ or $\|M^{k+1}\chi_S - M^k\chi_S \| < c' \|M \chi_S - \chi_S \|$.

\end{proof}

We start the proof of Theorem~\ref{graphequiv} with
$(1) \rightarrow (2)$. We consider the $d^2$-regular graph $G_n^2$ with vertex set $V(G_n)$, where two vertices are adjacent if they are connected by a path of length two.
The Markov operator on $G_n^2$ is $M_n^2$. We use the following remark to relate boundary sizes in $G_n$ and $G_n^2$.

\begin{remark}
Consider a $d$-regular graph $G$ and a subset $S \subseteq V(G)$. The $|\partial_G S| \geq  \frac{|\partial_{G^2} S|}{2d}$. On the other hand,
if every $s \in S$ has at least one neighbor in $S$ then $|\partial_G S| \leq  |\partial_{G^2} S|$.
\end{remark}

First we show that (a strengthening of) $(2)$ holds for $G_n^2$. We show that there exists a $C>0$ such that for every $\alpha>0$ for all, but finitely many $n$ there exists a  set $B_n \subseteq V(G_n), |B_n| < \alpha |V(G_n)|$ such that for every $T \subseteq V(G_n) \setminus B_n$ if $|\partial_{G_n^2} T| < C|T|$ then there exists a set $U$ such that $|U \triangle T| < |T|/3$ and $|\partial_{G_n^2} U| < \alpha |U|$. (Note that the strengthening is in the upper bound $|U \triangle T| < |T|/3$.) This is a consequence of Proposition~\ref{removal} and Corollary~\ref{kov} applied to $G_n^2$.

Finally, we need to show that $(2)$ holds for $G_n$, too. We can use the same exceptional set $B_n$ as for $G_n^2$ (for different $\alpha$ and $C$). 
Given a set $T$ with small boundaryin $G_n$ and hence in $G_n^2$ we get a set $U'$ with small boundary in $G_n^2$. But $|\partial_{G_n} U'|$ might be large.
However, if every vertex of $U'$ has a neighbor in $U'$ then $|\partial_{G_n} U'|$ is small as remarked. If a vertex of $U'$ has no neighbors in $U'$ we remove it from $U'$ recursively. 
The resulting set $U$ will satisfy the conditions by the Remark: We could not remove too many vertices from $U$ since ${|\partial_{G_n} U|}$ 
was small and it decreased by $d$ after every removal.

Now we show $(2) \rightarrow (3)$. 
The following process will give the required partition of $V(G_n)$.
We use $(2)$ for $C=\gamma$ and $\alpha'=\alpha/(d+3)$ in order to get a set $B_n$ (for all but finitely many $n$).  Set $P^0_n=B_n$, and for $i \geq 0$ proceed as follows.
If $|\partial T| \geq \gamma|T|$ for every $T \subseteq V(G_n) \setminus \big( \cup_{j=0}^i P^j_n  \big)$ then set $P^{ i+1}_n = V(G_n) \setminus \big( \cup_{j=0}^i P^j_n  \big)$.
If there exists a set $T^i_n \subseteq V(G_n) \setminus \big( \cup_{j=0}^i P^j_n  \big)$ such that $|\partial T^i| < \gamma|T^i_n|$ then consider the set $U^{i+1}_n$ given by $(2)$ and
define $P^{i+1}_n=U^{i+i}_n \setminus  \big( \cup_{j=0}^i P^j_n  \big)$. Note that $|P^{i+1}_n| < 2 |T^{i+1}_n|$. 
 
The total expansion of the sets in the partition is \\
$\sum_{i=0}^{a_n} | \partial P^i_n | \leq d|P^0_n| + \sum_{i=1}^{a_n-1} |\partial U^i_n| \leq d \alpha' |V(G_n)| + \alpha' \sum_{1 \leq  i \leq  a_n-1} 3|P^i_n| \leq 
(d+3)\alpha'|V(G_n)|=\alpha |V(G_n)|$.  

On the other hand, $|P^i_n| < 2 |T^i_n|$, hence, by the choice $T^i_n$, the inequality $|\partial S| \geq \gamma |S|$ holds for every $S \subseteq P^i_n$, where $|S| \leq |P^i_n|/2$.

$(3) \rightarrow (4)$ will follow from the next lemma. 

\begin{lemma}
For every $d$ and $\gamma>0$ there exists $\alpha>0$ such that the following holds: 
Consider a $d$-regular graph $G$ and a subset $S \subseteq V(G)$ such that $d|S|$ is even, $E(T,V(G)\setminus T)>\gamma|T|$ if $T \subseteq S, |T| \leq |S|/2$ and 
$|E(S,V(G) \setminus S)| \leq \alpha |S|$. Then there exists a $d$-regular graph $Q$ such that 
$V(Q)=S, |E(Q) \setminus E(G)|=|E(S,V(G) \setminus S)|$ and $|E(T, V(G) \setminus T)| \geq \frac{\gamma}{6} |T|$ 
if $T \subseteq S, |T| \leq |S|/2$. Moreover, if $G$ is regularly edge labelled then $Q$ is regularly edge labelled, too.
\end{lemma}

\begin{proof}
Let $B$ denote the set of vertices in $S$ adjacent to a vertex not in $S$. Set $r=4/\gamma$.
Find a matching $\mathcal{M}$ of size $|E(S,V(G) \setminus S)|/2$ in $S$ such that the distance of any two edges in $\mathcal{M}$ is greater than $2r$, and
no vertex of $B$ is adjacent to an endpoint of an edge in $\mathcal{M}$. Such a matching $\mathcal{M}$ exists if $\alpha$ is small enough. 
Let $M$  denote the set of vertices covered by $\mathcal{M}$.
Consider a bijection $b: E(S,V(G)\setminus S) \rightarrow M$. Let $Q$ denote the following graph:
$V(Q)=S, E(Q)= \{ (x,b((x,y))): x \in S, y \in V(G) \setminus S, (x,y) \in E(G)\} \cup \{(x,y) \in E(G): x,y \in S \} \setminus \mathcal{M}$.

We prove that $Q$ is a $\frac{\gamma}{6}$-expander.
Consider a subset of vertices $T \subseteq S$ of size at most $\frac{|S|}{2}$.
Note that $|E_Q(T ,S \setminus T) \cap E_G(T, S \setminus T)| \geq |E_G(T, V(G_n) \setminus T)| - |T \cap M| - \sum_{t \in T \cap B} |N(t)\setminus S|$.

On the other hand $|E_Q(T, S \setminus T) \setminus E_G(T ,S \setminus T)| =
|\{ (x,y): x \in B \cap T, y \in M \setminus T, \exists z \in V(G) \setminus S \text{ } (x,z) \in E(G), \text{ } b((x,z)) = y\}| 
\geq \sum_{t \in T \cap B} |N(t)\setminus S| - |T \cap  M|$. Together these imply $|E_Q(T ,S \setminus T)| \geq |E_G(T, V(G_n) \setminus T)| - 2|T \cap M|$.

Consider the intersection $S \cap B(x,r)$ for every $x \in T \cap M$:
If it has less than $r$ vertices then it contains the endpoint of at least one edge in $|E_Q(T, S \setminus T)|$.
Hence $|E_Q(T ,S \setminus T)| \geq |T \cap M| - |T|/r$. Altogether, $3|E_Q(T ,S \setminus T)| \geq  |E_G(T, V(G_n) \setminus T)| -2|T|/r \geq (\gamma-2/r) |T| = \frac{\gamma}{2} |T|$.

\end{proof}

Finally, we prove $(4) \rightarrow (1)$. For the Markov operator $\underline{M}_n$ of $\underline{G}_n$ the inequality $\| \underline{M}^4_n f-\underline{M}^2_n f \| \leq (1-\varepsilon) \| \underline{M}_n^2f-f \|$ holds, where $\varepsilon= \frac{\gamma^2}{2d^2}$ follows from the bound on the eigenvalue gap proved by Dodziuk \cite{D} and independently by Alon and Milman, see \cite{AS}. The additional term $\alpha|V(G_n)|^{\frac{1}{2}} \|f \|_{\infty}$ bounds the norm of the differences if we exchange $\underline{M}_n$ by $M_n$ (and $n$ is large enough):  \\
$\|\underline{M}_n^2f-f \| - \|M_n^2f-f \| \leq \|(\underline{M}_n^2-M_n^2)f \| \leq \| (\underline{M}_n - M_n)\underline{M}_n f \| + \| M_n(\underline{M}_n - M_n)f \| \leq 
|E(G_n) \triangle E(\underline{M}_n)|^{\frac{1}{2}} \| \underline{M}_n f \|_{\infty} +  \| (\underline{M}_n - M_n)f \| \leq 
3 |E(G_n) \triangle E(\underline{M}_n)|^{\frac{1}{2}} \|f\|_{\infty} = o(|V(G_n)|^{\frac{1}{2}} \|f\|_{\infty})$. The bound
$\|(M_n^4-M_n^2)f\| \leq \|(\underline{M}_n^4-\underline{M}_n^2)f\| + o(|V(G_n)|^{\frac{1}{2}} \|f\|_{\infty})$ can be obtained similarly.

\section{Embeddings}

We apply Theorem~\ref{Bowen} to study \textbf{embeddings} of graph sequences converging to the Cayley graph and sequences of $2$-dimensional CW complexes converging to the
Cayley complex of a Property (T) group. An $r$-ball in a CW complex centered at a $0$-cell $x$ will be the subcomplex spanned by the $0$-cells at distance at most $r$ from $x$,
where the distance is the graph distance in the $1$-skeleton. A sequence of CW complexes is locally convergent if for every $r$ the isomorphism class of a rooted $r$-ball centered at a $0$-cell chosen uniformly at random converges in distribution.

\begin{theorem}\label{gen}
Consider a sequence of finite CW complexes $\{ X_n \}_{n=1}^{\infty}$ that converges to a Cayley complex $\mathcal{C}$ of a countably infinite Property (T) group.
And consider a sequence of finite $1$-dimensional CW complexes $\{ Y_n \}_{n=1}^{\infty}$ and continuous mappings $\{ f_n: X_n \rightarrow Y_n \}_{n=1}^{\infty}$.
Then for every $K$ if $n$ is large enough then there exists a $1$-cell in $X_n$ whose image intersects the image of at least $K$ other $1$-cells.
\end{theorem}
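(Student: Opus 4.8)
The plan is to argue by contradiction, so suppose there is a constant $K$ and an infinite set of indices $n$ for which the image of every $1$-cell of $X_n$ meets the images of at most $K-1$ other $1$-cells; I will derive a contradiction for $n$ large. First I would pass to the $1$-skeleton $G_n = X_n^{(1)}$, a bounded-degree graph that locally converges to the Cayley graph of $\Gamma$, and apply Theorem~\ref{Bowen}: after deleting and adding $o(|V(G_n)|)$ edges, $G_n$ is a vertex-disjoint union of $\gamma$-expanders, so for the contradiction it suffices to treat $G_n$ as a $\gamma$-expander on $N=|V(G_n)| \to \infty$ vertices. After a homotopy I may take $f_n$ to be a generic cellular map, sending vertices to vertices and each $1$-cell to an edge-path in $Y_n$. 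The key consequence of the bounded-overlap hypothesis is then that \emph{every} point $y$ of $Y_n$, whether an interior point of an edge or a vertex, lies on the images of at most $K$ $1$-cells: all $1$-cells whose image contains $y$ pairwise intersect at $y$, so there can be at most $K$ of them.

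The two-dimensional content enters next. For a generic point $y$ in the interior of an edge $e$ of $Y_n$, define $\alpha_e \in C^1(X_n;\mathbb{Z}/2)$ by letting $\alpha_e(g)$ be the parity of the number of times the edge-path $f_n(g)$ traverses $e$; this depends only on $e$. I claim $\alpha_e$ is a cocycle. Indeed, for each $2$-cell $\sigma$ the image of $\partial\sigma$ is a null-homotopic loop in the \emph{graph} $Y_n$, hence trivial in the free group $\pi_1(Y_n)$, so it traverses each edge of $Y_n$ an even number of times; thus $\delta\alpha_e(\sigma)=0$. By the previous paragraph $\alpha_e$ has weight at most $K$. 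Moreover, summing over the edges at a vertex $v$ of $Y_n$ telescopes to $\sum_{e \ni v}\alpha_e = \delta\mathbf{1}_{V_v}$, where $V_v = \{u \in V(G_n): f_n(u)=v\}$ and $\{V_v\}_{v \in V(Y_n)}$ is the partition of $V(G_n)$ according to image vertex.

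Now I would convert cocycles into cuts. Because the Cayley complex $\mathcal{C}$ is simply connected and $\Gamma$ has Property (T), the finite approximations $X_n$ inherit (uniformly in $n$, through the local convergence and Theorem~\ref{Bowen}) a $\mathbb{Z}/2$ cosystolic expansion: a cocycle of weight $O(1)$ is, up to a bounded cosystolic defect, a coboundary of the same order. Applying this to $\alpha_e$ gives a vertex set $S_e$ with $\alpha_e = \delta\mathbf{1}_{S_e}$, i.e. $\{g:\alpha_e(g)=1\}$ is exactly the edge-boundary $\partial_{G_n}S_e$, of size at most $K$. Since $G_n$ is a $\gamma$-expander, $\min(|S_e|,N-|S_e|)\le K/\gamma$, so each elementary cut $S_e$ is either small or co-small. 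The endgame is to combine these elementary cuts into a single \emph{balanced} cut of bounded size: using the relation $\mathbf{1}_{V_v} = \sum_{e\ni v}\mathbf{1}_{S_e}$ (modulo a global constant) and a centroid decomposition of $Y_n$ weighted by the masses $|V_v|$, I would isolate a union $S$ of parts $V_v$ with $N/3 \le |S| \le 2N/3$ whose boundary $\partial_{G_n}S$ is a bounded $\mathbb{Z}/2$-sum of the $\partial_{G_n}S_e$ and hence of size $O(K)$. A balanced cut of bounded size in a $\gamma$-expander on $N\to\infty$ vertices is impossible, giving the contradiction.

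Two steps carry the real difficulty. The first is the transfer of $\mathbb{Z}/2$ cosystolic expansion to the $X_n$: Property (T) most directly yields a \emph{real} spectral gap, and upgrading this to a mod-$2$ statement while simultaneously controlling $H^1(X_n;\mathbb{Z}/2)$ (which, unlike that of $\mathcal{C}$, need not vanish globally) is the main analytic obstacle; this is exactly where local convergence and the expander structure from Theorem~\ref{Bowen} must be used in tandem. The second, and combinatorially the hardest, is the extraction of one balanced bounded cut from the family of small-or-co-small cuts $S_e$ when $Y_n$ has many cycles: the centroid argument must be carried out cohomologically, so that the cycles of $Y_n$ (equivalently, the failure of a generic fiber to separate $X_n$) are absorbed into the coboundary bookkeeping rather than obstructing the cut.
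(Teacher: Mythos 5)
Your setup is sound through the construction of the mod-$2$ cochains: reducing to the $1$-skeleton, noting that every point of $Y_n$ lies in the image of at most $K$ one-cells, and checking that each $\alpha_e$ is a $\mathbb{Z}/2$ cocycle of weight at most $K$ (your use of null-homotopy of $f_n(\partial\sigma)$ in the graph $Y_n$ plays the same role as the paper's appeal to simple connectivity of $\mathcal{C}$). But the two steps you yourself flag as obstacles are not deferred technicalities; they are the entire content of the theorem, and neither can be filled the way you sketch. First, $\mathbb{Z}/2$ cosystolic expansion does \emph{not} follow from Property (T): Kazhdan's property gives a spectral gap for unitary representations, i.e.\ expansion with real coefficients, and no known mechanism transfers this to $\mathbb{Z}/2$ coefficients. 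Worse, the statement you need --- every weight-$O(1)$ cocycle on $X_n$ equals $\delta\mathbf{1}_{S_e}$ with $|\partial S_e|\le K$ --- is false for general sequences $X_n$ locally converging to $\mathcal{C}$: local convergence tolerates an $o(|V|)$-sized ``handle'' (a thin tube joining two distant regions of the complex), and the cocycle dual to a cross-section of the tube has bounded weight but is not a coboundary, nor near one in any useful sense. Second, even granting $\alpha_e=\delta\mathbf{1}_{S_e}$, your balanced-cut extraction requires a set $T\subseteq V(Y_n)$ of balanced pushforward mass whose edge-boundary \emph{in $Y_n$} is bounded, since $\delta\mathbf{1}_S=\sum_{e\in\partial_{Y_n}T}\alpha_e$. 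But $Y_n$ is an arbitrary $1$-complex and may itself be an expander carrying a spread-out pushforward measure, in which case no such $T$ exists; excluding that configuration is essentially the theorem itself, so the ``cohomological centroid'' step is circular as sketched.

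The paper circumvents both gaps with an idea absent from your proposal: a covering-space argument (Lemma~\ref{pfold}). Instead of trying to show that the edge-labeling is a coboundary, the paper assigns independent random $\pm1$ weights in $\mathbb{Z}_p$ to the shared edges of $Y_n$, pulls them back to a $\mathbb{Z}_p$-valued antisymmetric labeling $\varphi_n$ of $E(G_n)$ bounded by $L=2^K$, and uses $\varphi_n$ to build a $p$-fold cover $C_n$ of $G_n$. The vanishing of $\varphi_n$ on short cycles (your cocycle condition, again from simple connectivity of $\mathcal{C}$) guarantees that $C_n$ still locally converges to the same Cayley graph, so Theorem~\ref{Bowen} --- which needs only the real, Property (T) expansion --- applies to the cover. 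Equidistribution of the holonomy of $\varphi_n$ along long random walks (this is where the random signs and the bound of $2^K$ weighted edges per $1$-cell enter, via the claim that long walks must cross many weighted edges) then shows the $\mathbb{Z}_p$-fibers give a nearly balanced cut of $C_n$ with only $O(dL/p)$ boundary edges, forcing $p\le 1+4L/\gamma$; choosing $p>4\cdot 2^K/\gamma$ yields the contradiction. In short, the covering trick converts the would-be mod-$p$ cohomological statement into a question about real expansion of an auxiliary graph sequence, which is exactly what Property (T) can deliver and what your direct cohomological route cannot.
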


This is an alternative construction of $2$-dimensional \textbf{non-hyperfinite simplicial complexes} in the sense of Freedman and Hastings \cite{FH}.
Informally speaking, a $2$-dimensional simplicial complex is $1$-hyperfinite if for every $\varepsilon>0$ it admits after the removal of an $\varepsilon$-proportion of
the $0$-cells and the higher dimensional cells containing these a continuous mapping to a $1$-dimensional complex such that the pre-image of every point has bounded
diameter (depending on $\varepsilon)$.

\begin{cor}
Consider a sequence of finite CW complexes that converges to a Cayley complex of a finitely presented infinite Property (T) group, and a sequence
$\{ X_n \}_{n=1}^{\infty}$ of simplicial complexes obtained by the subdivision of the $2$-faces of the CW complexes into a bounded number of simplices.
Then $\{ X_n \}_{n=1}^{\infty}$ is not $1$-hyperfinite.
\end{cor}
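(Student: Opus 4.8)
The plan is to derive the Corollary from Theorem~\ref{gen} by a reduction, turning a hypothetical $1$-hyperfinite structure on the simplicial complexes $\{X_n\}$ into a family of continuous maps to $1$-dimensional complexes that would contradict the conclusion of Theorem~\ref{gen}. Let $\{\mathcal{C}_n\}$ be the original CW complexes converging to the Cayley complex $\mathcal{C}$, so that $X_n$ is obtained by subdividing each $2$-face of $\mathcal{C}_n$ into at most $s$ simplices for a fixed bound $s$. The first step is to check that $\{X_n\}$ itself locally converges to a Cayley complex of the same Property (T) group: subdivision is a local, bounded operation, so the isomorphism type of an $r$-ball in $X_n$ is determined by the isomorphism type of a slightly larger ball in $\mathcal{C}_n$, and the target of convergence is the analogously subdivided Cayley complex $\mathcal{C}'$, which is again a Cayley complex (after relabelling) of the same group. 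This lets us apply Theorem~\ref{gen} to the sequence $\{X_n\}$.

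Next I would suppose, for contradiction, that $\{X_n\}$ is $1$-hyperfinite. Fix a threshold $K$ as provided by Theorem~\ref{gen}. By the definition of $1$-hyperfiniteness, choosing $\varepsilon>0$ small enough, for all large $n$ we may remove an $\varepsilon$-portion of the $0$-cells (together with the simplices containing them) from $X_n$ to obtain a subcomplex $X_n^{\varepsilon}$ admitting a continuous map $g_n \colon X_n^{\varepsilon} \to Y_n$ to a $1$-dimensional complex with all point-preimages of diameter bounded by some $D=D(\varepsilon)$. The crucial bounded-multiplicity point is that if every fiber has diameter at most $D$, then the image of any fixed $1$-cell can meet the images of only boundedly many other $1$-cells: two $1$-cells whose images intersect must contain points lying in a common fiber, hence lying within distance $D$ of one another in the $1$-skeleton, and by bounded degree there are only $C(D,d)$ cells within distance $D$ of a given cell. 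This contradicts the conclusion of Theorem~\ref{gen}, provided we first extend $g_n$ from the subcomplex $X_n^{\varepsilon}$ to all of $X_n$.

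The main technical step, and the one I expect to be the chief obstacle, is precisely this extension: Theorem~\ref{gen} asks for maps defined on the whole complex $X_n$, whereas $1$-hyperfiniteness only supplies a map on $X_n^{\varepsilon}$, defined after deleting an $\varepsilon$-portion of cells. I would handle this by collapsing the deleted part: adjoin the removed subcomplex to $Y_n$ by coning off each boundary component, or more simply map the entire deleted region to a single new auxiliary point (or a small auxiliary $1$-complex glued along the boundary), producing a $1$-dimensional complex $Y_n'$ and a continuous map $f_n \colon X_n \to Y_n'$ defined everywhere. The subtlety is that this crushing can artificially create high-multiplicity overlaps among the $1$-cells touching the deleted region; but since those cells number at most a linear-in-$\varepsilon$ fraction, while Theorem~\ref{gen} produces a violating $1$-cell among a positive (indeed overwhelming) fraction, one can arrange $\varepsilon$ small enough that a violating cell is guaranteed to lie in the undeleted part $X_n^{\varepsilon}$, where the bounded-diameter fibers of $g_n$ force bounded multiplicity.

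Finally I would assemble these pieces: the map $f_n \colon X_n \to Y_n'$ has bounded-multiplicity overlap on all of $X_n^{\varepsilon}$, so no $1$-cell in the undeleted part has image meeting $K$ others, while the conclusion of Theorem~\ref{gen} guarantees such a $1$-cell exists for every large $n$. Since Theorem~\ref{gen} locates its high-multiplicity cell robustly (for arbitrarily large $K$ and all large $n$), and the deletion only affects an $\varepsilon$-fraction, this is a genuine contradiction once $\varepsilon$ and the book-keeping of constants are fixed. Hence $\{X_n\}$ cannot be $1$-hyperfinite, which is the assertion of the Corollary.
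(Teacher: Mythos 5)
Your reduction founders at the extension step, and the way you patch it misstates Theorem~\ref{gen}. The theorem asserts only that \emph{there exists one} $1$-cell whose image meets the images of at least $K$ others; it says nothing about a positive (let alone overwhelming) fraction of violating cells, nor about where such a cell sits. Once you crush the deleted region to a point (or glue in any small auxiliary complex), every $1$-cell incident to a crushed component has image passing through the crush point, so all of these cells meet each other's images; the removed region is handed to you by the $1$-hyperfiniteness hypothesis and can have large connected components (say, of size comparable to $\varepsilon|V|$), in which case these artificial multiplicities vastly exceed $K$. The extended map then \emph{satisfies} the conclusion of Theorem~\ref{gen}, and no contradiction results. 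The strengthening you invoke --- a violating cell guaranteed outside any prescribed $\varepsilon$-fraction --- does not follow formally from the stated theorem: you cannot simply delete the offending region from the complex (the truncated sequence no longer converges to the Cayley complex for fixed $\varepsilon$), and you cannot diagonalize with $\varepsilon_n\to 0$ (the fiber-diameter bound $D(\varepsilon_n)$, hence the multiplicity bound $K(\varepsilon_n)\approx d^{D(\varepsilon_n)}$, then blows up, so no fixed $K$ is ever contradicted). What is really needed is a quantitatively robust version of Theorem~\ref{gen}, tolerating maps defined only off an $\varepsilon$-portion of the cells, with admissible $\varepsilon$ not depending on $K$ in a way that collides with the dependence $K=K(\varepsilon)$; establishing this means reopening the proofs of Lemma~\ref{pfold} and Theorem~\ref{Bowen} (e.g.\ via compactness and a careful check of which tolerances are independent of $K$ and $p$), not citing the statement of Theorem~\ref{gen}.

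Two further problems. First, the extension you propose may not exist at all: a continuous map from a closed $2$-cell into a $1$-dimensional complex forces the image of its boundary to be null-homotopic in the target, but a removed $2$-cell retains part of its boundary inside $X_n^{\varepsilon}$, where $g_n$ is already prescribed, and the resulting boundary loop need not become null-homotopic after attaching arcs to a new vertex; an honest cone over a graph is $2$-dimensional, which leaves the allowed category of targets. Second, applying Theorem~\ref{gen} to the subdivided sequence $\{X_n\}$ is unjustified: the subdivisions of distinct $2$-faces may differ and vary with $n$, so $\{X_n\}$ need not locally converge at all, and even when it does the limit is a subdivided complex, which is not a Cayley complex of the group. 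This last point is easily repaired --- subdivision does not change the underlying topological space, so one should run the whole argument on the original CW sequence, losing only a bounded factor (the number of simplices per face) in the multiplicity count --- but the extension gap and the misreading of Theorem~\ref{gen} are genuine.
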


Theorem~\ref{gen} has a surprising graph theoretical corollary, too. This answers a question of L. M. Lov\'asz.

\begin{cor}\footnote{An ineffective version of the corollary can be obtained using that Property (T) groups admit no treeable almost free action \cite{Tvstree}
and that every subrelation (refinement) of a treeable relation is treeable \cite{G}. This holds also in the case when $\Gamma$ is not finitely presented.}
Consider a sequence of finite graphs $\{ G_n \}_{n=1}^{\infty}$ that converges to a Cayley graph corresponding to a finite presentation of a Property (T) group $\Gamma$.
The sequence $\{ G_n \}_{n=1}^{\infty}$ admits no $L$-Lipshitz embedding for any $L$ into a sequence of graphs with bounded degree and girth greater than $rL$,
where $r$ is the length of the shortest relation in the presentation of $\Gamma$.
\end{cor}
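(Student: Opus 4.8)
The plan is to reduce the statement to Theorem~\ref{gen} by converting a hypothetical embedding into a continuous map of $2$-complexes. First I would attach $2$-cells to the graphs $G_n$ to manufacture a sequence of CW complexes $\{ X_n \}_{n=1}^{\infty}$ converging to the Cayley complex $\mathcal{C}$ of $\Gamma$: wherever an $r$-ball of $G_n$ is isomorphic to the corresponding ball of the Cayley graph, which happens for a $1-o(1)$ fraction of the vertices by local convergence, I would glue a $2$-cell along every cycle that spells a relator of the fixed finite presentation. Since each vertex lies on only a bounded number of such relator loops, the resulting $X_n$ has bounded combinatorial size per vertex, and by construction $\{ X_n \}$ converges to $\mathcal{C}$, so Theorem~\ref{gen} is applicable to it.

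Next, suppose toward a contradiction that for some $L$ there were $L$-Lipschitz embeddings $\phi_n\colon G_n\to Y_n$ into bounded-degree graphs $Y_n$ of girth greater than $rL$. I would first extend $\phi_n$ over the $1$-skeleton by sending each edge of $G_n$ to a geodesic path of length at most $L$ in $Y_n$, which is a continuous map on the $1$-skeleton. The crucial step is to extend this map over the $2$-cells of $X_n$. The boundary of a relator $2$-cell is a cycle whose image is a closed walk of length at most (the relator length)$\cdot L$ in $Y_n$, the shortest relator contributing length at most $rL$; because the girth of $Y_n$ exceeds $rL$, such a closed walk reduces to the trivial walk after cancelling backtracks, since any nontrivial reduced closed walk would contain an embedded cycle and hence have length at least the girth. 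A null-homotopic loop extends over the disk into the $1$-complex $Y_n$, so one obtains a continuous $f_n\colon X_n\to Y_n$.

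Applying Theorem~\ref{gen} to $\{ X_n \}$ and these maps $f_n$, for every $K$ and all large $n$ there is a $1$-cell of $X_n$ whose image meets the images of at least $K$ other $1$-cells. I would then contradict this by a counting argument exploiting injectivity of $\phi_n$. If $Y_n$ has degree at most $D$, then the ball of radius $L$ about any vertex $y\in V(Y_n)$ has at most $N=1+D+\cdots+D^{L}$ vertices, so by injectivity at most $N$ vertices of $G_n$ map into it; as $G_n$ has degree at most $d$, at most $Nd$ edges of $G_n$ have both endpoint images in this ball, and only these can have $1$-cell images passing through $y$. A single $1$-cell image is a path on at most $L+1$ vertices, hence it can meet the images of at most $(L+1)Nd$ other $1$-cells, a bound independent of $n$. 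Choosing $K>(L+1)Nd$ contradicts the conclusion of Theorem~\ref{gen}, so no such embedding exists.

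I expect the main obstacle to be the extension over the $2$-cells, that is, making the girth bound do exactly the work of contracting the relator loops. The hypothesis girth $>rL$ kills loops coming from relators of length $r$, and the delicate point is to ensure that the cells actually attached to build $X_n$ are contracted by the available girth while $\{ X_n \}$ still genuinely converges to the full Cayley complex $\mathcal{C}$ of the Property (T) group; controlling this interplay between the relator lengths, the Lipschitz constant, and the girth is the technical heart of the argument. The companion subtlety, routine but requiring care, is checking that the $o(1)$ fraction of ``bad'' vertices where the local structure deviates from the Cayley graph does not spoil local convergence of $\{ X_n \}$. By contrast, the final counting step is elementary.
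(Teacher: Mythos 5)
Your overall strategy is the same as the paper's: attach $2$-cells to $G_n$ to build complexes $X_n$ converging to a Cayley complex, extend the Lipschitz embedding to a continuous $f_n\colon X_n \rightarrow Y_n$ by sending edges to geodesic paths and contracting cell boundaries via the girth hypothesis, apply Theorem~\ref{gen}, and derive a contradiction by a bounded-degree count. Your final counting argument is correct, and it is exactly what the paper compresses into ``the theorem follows, since the degrees are bounded.'' However, there is a genuine gap at the step you yourself flag as the technical heart, and it is not a removable delicacy of your formulation: with your choice of $2$-cells (one along every cycle spelling a relator of the given presentation), the extension over $2$-cells fails whenever the presentation contains a relator of length greater than $r$. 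The boundary of such a cell maps to a closed walk of length up to (that relator's length)$\cdot L$, which can exceed $rL$, and the hypothesis that the girth exceeds $rL$, with $r$ the \emph{shortest} relator length, says nothing about whether that walk is contractible in $Y_n$. So, as written, your $f_n$ need not exist unless all relators have length at most $r$.

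The paper closes exactly this gap by a different cell-attachment rule: it puts a $2$-cell on \emph{every} cycle of length at most $r$ in $G_n$, not only on relator cycles. Then every attached cell has boundary image of length at most $rL$, strictly less than the girth, so the extension always succeeds; the price is that one must know the limit object --- the Cayley graph with $2$-cells on all cycles of length at most $r$ --- is a (simply connected) Cayley complex of $\Gamma$, i.e.\ that the words of length at most $r$ representing the identity already present $\Gamma$. That is the implicit content of the corollary's girth threshold, and it is automatic if $r$ is read as the longest relator length. A secondary advantage of the paper's rule is that ``cycle of length at most $r$'' is a label-free condition, whereas your ``spells a relator'' requires choosing local isomorphisms with the labeled Cayley ball; such choices are not canonical (rooted ball automorphisms need not preserve the set of relator cycles), so even your convergence claim for $\{X_n\}$ needs additional care. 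If you replace your attachment rule by the paper's, the rest of your argument goes through essentially verbatim.
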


\begin{proof}
Consider a sequence of large girth graphs, an integer $L$ and a sequence of $L$-Lipschitz mappings. Let $\{ X_n \}_{n=1}^{\infty}$ denote the following sequence
of $2$-dimensional CW-complexes: The $1$-skeleton of $X_n$ is $G_n$ and for every cycle of length at most $r$ there is a $2$-cell on this cycle. Let $\{ Y_n \}_{n=1}^{\infty}$
denote the sequence of $1$-dimensional CW-complexes. And let $\{ f_n: X_n \rightarrow Y_n \}_{n=1}^{\infty}$ a sequence of continuous mappings extending the sequence of the
$L$-Lipschitz mappings: Every $1$-cell of $X_n$ will be mapped to the shortest path connecting the image of its two $0$-subcells. The image of every short cycle will be
null-homotopic by the girth condition, hence the mapping can be extended to $2$-cells. Theorem~\ref{gen} can be applied: The theorem follows, since the degrees are bounded.
\end{proof}

\subsection{Proofs}

\begin{lemma}\label{pfold}
Consider a sequence of finite $d$-regular graphs $\{ G_n \}_{n=1}^{\infty}$. Assume that $\{ G_n \}_{n=1}^{\infty}$ locally converges to the Cayley
graph of a finitely generated infinite group $\Gamma$ with Property (T). Consider a positive integer $L$, a prime $p$ and a mapping
$\varphi_n: E(G_n) \rightarrow \{ -L, \dots , L \} \subseteq \mathbb{Z}_p$ for every $n$. Assume the followings.

\begin{enumerate}

\item{$\varphi((x,y))=-\varphi((y,x))$ for every edge $(x,y)$.}

\item{Given an integer $l$ consider the sum of $\varphi_n$ over every cycle of length $l$.
Assume that for every $l$ the proportion of cycles with nonzero sum goes to zero as $n$ goes to infinity.}

\item{Given an integer $t>0$ let $s^n_0, \dots , s^n_t$ be a random walk on $G_n$ chosen uniformly at random. Assume that the limit distribution
$lim_{t \rightarrow \infty} lim_{n \rightarrow \infty} \sum_{i=1}^t \varphi_n((s^n_{i-1},s^n_i))$ exists, where we consider the statistical distance,
and it is equal to the uniform distribution on $\mathbb{Z}_p$.}

\end{enumerate}

Then $p \leq 1 + \frac{4dL}{\gamma}$, where $\gamma>0$ is given in Theorem~\ref{Bowen}.
\end{lemma}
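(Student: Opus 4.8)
The plan is to pass to the $\mathbb{Z}_p$-cover of $G_n$ determined by $\varphi_n$ and to apply Theorem~\ref{Bowen} there, so that the fiber coordinate becomes a globally defined $\mathbb{Z}_p$-valued height function whose level sets are balanced and across whose edges the height can jump by at most $L$. Concretely, let $\hat G_n$ be the graph on $V(G_n) \times \mathbb{Z}_p$ in which $(x,a)$ is joined to $(y, a+\varphi_n((x,y)))$; assumption $(1)$ makes this well defined and $d$-regular, and the projection $\Phi(x,a)=a$ takes every value of $\mathbb{Z}_p$ exactly $|V(G_n)|$ times, so it is globally equidistributed.

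First I would show that $\hat G_n$ locally converges to the same Cayley graph of $\Gamma$. The rooted $r$-ball of $\hat G_n$ around $(x,a)$ is isomorphic to the rooted $r$-ball of $G_n$ around $x$ precisely when the cover is trivial over $B(x,r)$, i.e. when every cycle contained in $B(x,r)$ has vanishing $\varphi_n$-sum. Every such cycle has length bounded by a constant $l_0=l_0(d,r)$ (the size of an $r$-ball), so assumption $(2)$, applied to each $l\le l_0$ together with the bounded number of short cycles through a vertex, forces the fraction of roots $x$ admitting a nonzero short cycle sum inside $B(x,r)$ to tend to $0$. Hence the local statistics of $\hat G_n$ and of $G_n$ agree up to $o(1)$, and since $G_n$ converges to the Cayley graph of $\Gamma$ so does $\hat G_n$. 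Theorem~\ref{Bowen} then applies to $\hat G_n$ with the very same $\gamma$ (the constant produced there depends only on the Kazhdan constant of $\Gamma$ and on $d$), yielding $\hat G'_n$, a vertex-disjoint union of $\gamma$-expanders, with $|E(\hat G_n)\Delta E(\hat G'_n)|=o(|V(\hat G_n)|)$.

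Next I would establish that $\Phi$ is nearly equidistributed on $\mathbb{Z}_p$ inside all but a vanishing (size-weighted) fraction of the expander components. Within a $\gamma$-expander the walk mixes, so the endpoint of a long walk is nearly uniform in its component; telescoping identifies the walk-sum $\sum_i \varphi_n((s^n_{i-1},s^n_i))$ with $\Phi(s^n_t)-\Phi(s^n_0)$, up to the $o(1)$ fraction of steps traversing edges where $\hat G_n$ and $\hat G'_n$ differ, i.e. with the difference of two near-independent samples of the restriction $\mu_C$ of $\Phi$ to the component $C$. Assumption $(3)$ forces the resulting law, a size-weighted average of the self-difference laws $\mu_C\ast\mu_C^-$, to be uniform on $\mathbb{Z}_p$; since each $\mu_C\ast\mu_C^-$ has nonnegative Fourier coefficients $|\hat\mu_C(k)|^2$ and the uniform law annihilates every nonzero frequency, I get $\hat\mu_C(k)=0$ for all $k\neq 0$ for almost every component, i.e. $\Phi$ is equidistributed there.

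Finally I would run a cut argument inside a typical $\gamma$-expander component $C$. Partition $C$ by the value of $\Phi$ into $p$ classes, each of size about $|C|/p$, and set $S=\{v\in C:\Phi(v)\in\{1,\dots,\lfloor p/2\rfloor\}\}$, so that $|S|\le |C|/2$. Because every edge changes $\Phi$ by at most $L$ in the cyclic metric, only vertices whose $\Phi$-value lies within $L$ of one of the two ends of the arc $\{1,\dots,\lfloor p/2\rfloor\}$ can send an edge across the cut; these occupy at most $2L$ classes, whence $|E_C(S,C\setminus S)|\le 2L d\,|C|/p$. The $\gamma$-expansion of $C$ gives $|E_C(S,C\setminus S)|\ge \gamma d\,|S|\ge \gamma d \lfloor p/2\rfloor\,|C|/p$, and comparing the two bounds yields $\gamma\lfloor p/2\rfloor\le 2L$, hence $p\le 2\lfloor p/2\rfloor+1\le 1+\tfrac{4L}{\gamma}$. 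The main obstacle is the third step: converting the single global hypothesis $(3)$ on the walk-sum into genuine level-set balance inside individual expander components, where the expander mixing combines with the Fourier-positivity of self-difference distributions; throughout, the $o(|V(\hat G_n)|)$ discarded edges and the atypical components coming from Theorem~\ref{Bowen} must be carried along so that they perturb the final inequality by only an additive $o(1)$.
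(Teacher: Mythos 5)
Your overall route is the same as the paper's: build the $\mathbb{Z}_p$-cover $\hat G_n$ (the paper's $C_n$), use assumption (2) to show it still converges locally to the same Cayley graph, apply Theorem~\ref{Bowen} with the same $\gamma$ (correctly observing that $\gamma$ depends only on the Kazhdan data and $d$, so it does not change when passing from $G_n$ to the cover), use assumption (3) to show the fiber coordinate is equidistributed on the expander pieces, and finish with the cut $S=\Phi^{-1}(\{1,\dots,\lfloor p/2\rfloor\})$, whose boundary is at most $2Ld|C|/p$ because edges jump by at most $L$, against the expansion lower bound $\gamma d|S|$. In fact your handling of the decomposition is slightly cleaner than the paper's: you argue component-by-component for arbitrary components, whereas the paper invokes a somewhat hand-waved dichotomy (either $C_n$ is essentially one expander or essentially $p$ permuted copies of $G_n$).

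There is, however, a genuine flaw in your equidistribution step: the claim that "within a $\gamma$-expander the walk mixes, so the endpoint of a long walk is nearly uniform in its component" is false for the notion of expansion used here. $\gamma$-expansion is edge (Cheeger) expansion and does not exclude bipartite or near-bipartite components, on which the simple random walk does not converge to the uniform distribution and the endpoint is not asymptotically independent of the starting point. This case cannot be dismissed: an infinite Property (T) group can have a bipartite Cayley graph (take $\Gamma_0\times\mathbb{Z}/2\mathbb{Z}$ with $\Gamma_0$ having (T) and every generator flipping the second coordinate), so the components of $\hat G'_n$ may be near-bipartite. Consequently your identification of the conditional walk-sum law with the self-difference law $\mu_C\ast\mu_C^-$ breaks down. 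The repair keeps your Fourier-positivity idea but discards mixing: restrict to even lengths $2t$ and use self-adjointness of $M$ instead. With $g_k=\omega^{k\Phi}$, the conditional characteristic function of the walk-sum is $\langle M^{2t}g_k,g_k\rangle=\|M^tg_k\|^2$, which is automatically nonnegative and, by Cauchy--Schwarz and $M\mathbf{1}=\mathbf{1}$, at least $|\langle g_k,\mathbf{1}\rangle|^2=|\hat\mu_C(k)|^2$; assumption (3) then forces $\hat\mu_C(k)\to 0$ for every $k\neq 0$ on a size-weighted almost-every component, which is the equidistribution you need. This corrected argument is exactly the paper's own, written there with level sets rather than characters: the return probability to a level set after $2t$ steps equals $\sum_z\|M^t\chi_{f_*^{-1}(z)}\|^2\geq\sum_z|f_*^{-1}(z)|^2\geq 1/p$, and assumption (3) forces asymptotic equality, hence $|f_*^{-1}(z)|=1/p+o(1)$.
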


\begin{proof}
We may assume that the sequence $\{ G_n \}_{n=1}^{\infty}$ is expander. The mapping $\varphi_n$ induces a $p$-fold covering of $G_n$, denote the covering graph by $C_n$.
The vertices of the graph $C_n$ are pairs $(x,y)$, where $x \in G_n$ and $y \in \mathbb{Z}_p$, and a pair of vertices $(x,y)$ and $(x',y')$ is an edge if
$x$ and $x'$ are adjacent and $y-y' = \varphi_n((x, x'))$. Now $\mathbb{Z}_p$ embeds into the automorphism group of $C_n$ acting on the second coordinate.
The sequence $\{ C_n \}_{n=1}^{\infty}$ converges still to the same Cayley graph, this is the point where we use $(2)$. Hence by Theorem~\ref{Bowen} it is essentially
a disjoint union of expanders. It is easy to see that it is either essentially an expander or essentially a disjoint union of $p$ expanders isomorphic to $G_n$,
where the $\mathbb{Z}_p$-action is essentially permuting these subgraphs.

First suppose that (an infinite subsequence of) $\{ C_n \}_{n=1}^{\infty}$ is essentially an expander sequence: There is a $\gamma>0$
given by Theorem~\ref{Bowen} such that for every subset $S \subseteq C_n$, where $|S|<\frac{1}{2}|C_n|$ we have $|E(S, S^c)| \geq \gamma |S| + o(|C_n|)$.
Consider the set $S=V(G_n) \times \{ 1, \dots , \frac{p-1}{2}\} \subset V(C_n)$.
Clearly $|S|=\frac{p-1}{2p} |V(C_n)|$. Since $|E(S, S^c)| \leq 2dL|V(G_n)|$, we can conclude that $p \leq \frac{4dL}{\gamma} + 1$.

Now suppose that (an infinite subsequence of) $\{ C_n \}_{n=1}^{\infty}$ is essentially a disjoint union of $p$ expanders isomorphic to $G_n$.
Consider one of these expander graphs $D_n$ essentially isomorphic to $G_n$. Note that $(1)-(3)$ still holds for $D_n$. For every $z \in \mathbb{Z}_p$ we
have $|(V(G_n) \times \{ z \}) \cap V(D_n)| = (\frac{1}{p} + o(1))|D_n|$ by $(2)$.

Consider the set $S=(V(G_n) \times \{ 1, \dots , \frac{p-1}{2}\}) \cap D_n$. Clearly $|S|=(\frac{p-1}{2p} + o(1)) |V(D_n)|$.
Since $|E(S, S^c)| \geq \gamma |S| + o(|V(D_n)|)$ and $|E(S, S^c)| \leq \frac{2dL}{p} |V(D_n)| + o(1)$ we conclude that $p \leq 1 + \frac{4dL}{\gamma}$.
\end{proof}

\begin{proof}(of Theorem~\ref{gen})
We prove by contradiction. Consider a sequence of finite CW complexes $\{ X_n \}_{n=1}^{\infty}$ that converges to a Cayley complex $\mathcal{C}$ of a
finitely generated infinite Property (T) group, a sequence of finite $1$-dimensional CW complexes $\{ Y_n \}_{n=1}^{\infty}$, continuous mappings
$\{ f_n: X_n \rightarrow Y_n \}_{n=1}^{\infty}$ and an integer $K$. Suppose for a contradiction that the image of every $1$-cell in $X_n$ intersects the image
of at most $K$ other $1$-cells. We think of $Y_n$ as a graph and use the graph theoretical terminology. We may assume that the image of every $0$-cell is a vertex
and the image of every $1$-cell is a path, since we can change $f$ by a deformation to achieve these.

We choose a prime $p$ later. We assign an element of $\{ -1, +1 \} \subset \mathbb{Z}_p$ independently at random (with probability $\frac{1}{2}$ each) to a subset
of edges of $Y_n$: We will call such edges weighted. To every nonempty set of $1$-cells in $X_n$ whose image intersects in an edge at least we will assign such a
weighted edge in $Y_n$, and to every such maximal (by containment) set of $1$-cells we will assign a different edge. The image of every $1$-cell contains less than $2^K$ weighted edges.

Let $G_n$ denote the $1$-skeleton of $X_n$, $G_n$ is a graph. We show that the image of a long uniform random walk in $G_n$ under $f_n$ contains many weighted edges with
high probability (using that the image of every $1$-cell in $X_n$ intersects the image of at most $K$ other $1$-cells).

\textbf{Claim:} For every $k$ there is a $t$ such that the image of a uniform random walk of length at least $t$ in $G_n$ (with a uniformly random starting vertex)
contains at least $k$ weighted edges with probability at least $(1-1/k)$ if $n$ is large enough (depending on $k, K$).

\begin{proof}
Choose a starting vertex $s_0$. Consider the image of the walk under $f_n$. And consider the path between $f_n(s_0)$ and $f_n(s_t)$ obtained by the removal of the cycles of
this walk in $Y_n$. If this path has at most $k$ weighted edges then it can be covered by the image of at most $k$ $1$-cells. If there is no point whose pre-image intersects
at least $K$ $1$-cells then there are less than $K^{k+1}$ possible images of $0$-cells reachable via such a short sequence from $f_n(s_0)$, and the probability that the
endpoint is the pre-image of any of these $0$-cells can be arbitrary small if $t$ and $n$ are large enough.
\end{proof}

Consider the mapping $\varphi_n: E(G_n) \rightarrow \mathbb{Z}_p$ that assigns to every edge of $G_n$ ($1$-cell of $X_n$) the sum of these weights over the edges in its image
(with orientation and multiplicity).

The values assigned to the weighted edges are chosen uniformly at random from $\{ -1, +1 \} \subset \mathbb{Z}_p$ independently. The sum over every path will be distributed
identically to the endpoint of a random walk on the cycle of length $p$, where the number of steps equals to the number of weighted edges on the path. This will converge to
the uniform distribution as the length goes to infinity. And the value for paths with disjoint images will be independent, hence we have a concentration. Note that so far we
have only used that $X_n$ is large enough and connected.

Choose a prime $p>4\frac{d2^K}{\gamma}+1$. For every length $l$ the proportion of cycles with nonzero sum can be arbitrary small if $n$ is large enough, since $\mathcal{C}$ is simply
connected and the sum over null-homotopic cycles is zero. Hence condition $(2)$ of Lemma~\ref{pfold} is satisfied. The distribution of the sum over uniform random walks in
condition $(3)$ can be arbitrarily close to uniform on $\mathbb{Z}_p$ if $n$ is large enough, since $X_n$ is large enough and connected. Lemma~\ref{pfold} gives a contradiction,
the theorem follows.
\end{proof}

\end{document}